\newcommand{\lcm}{\mathrm{lcm}}
\newtheorem{theorem}{Theorem}[section]
\newtheorem*{maintheorem}{Theorem 1.1}
\newtheorem{lemma}[theorem]{Lemma}
\newtheorem{prop}[theorem]{Proposition}
\newtheorem{cor}[theorem]{Corollary}
\theoremstyle{remark}
\newtheorem{remark}[theorem]{Remark}
\begin{document}
\baselineskip=20pt
\title{A uniqueness of periodic maps on surfaces}
\author{Susumu Hirose\thanks{This research was partially supported 
by Grant-in-Aid for Scientific Research (C) (No.24540096), 
Japan Society for the Promotion of Science. }, 
Yasushi Kasahara\thanks{This research was partially supported 
by Grant-in-Aid for Scientific Research (C) (No.23540102), 
Japan Society for the Promotion of Science. }}
\date{}
\maketitle
\begin{abstract}
Kulkarni showed that, if $g$ is greater than $3$, 
a periodic map on an oriented surface $\Sigma_g$ of genus $g$ 
with order more than or equal to $4g$ is uniquely determined by its order, 
up to conjugation and power.  
In this paper, we show that, if $g$ is greater than $30$, 
the same phenomenon happens for periodic maps 
on the surfaces with orders more than $8g/3$ 
and, for any integer $N$, 
there is $g > N$ such that there are periodic maps of $\Sigma_g$ of 
order $8g/3$ which are not conjugate up to power each other.  
Moreover, as a byproduct of our argument, 
we provide a short proof of Wiman's classical theorem: 
the maximal order of periodic maps of $\Sigma_g$ is $4g+2$. 
\end{abstract}

\section{Introduction}
Let $\Sigma_g$ be the oriented closed surface of 
genus $g \geq 2$. 
By the Nielsen-Thurston theory \cite{Thurston}, 
orientation preserving homeomorphisms of $\Sigma_g$ are classified into 3-types: 
(1) periodic, (2) reducible, (3) pseudo-Anosov. 
For each type, there are important values describing 
conjugacy classes, for example, the {\em orders\/} of periodic maps. 
Kulkarni \cite{Kulkarni} showed that 
if the genus $g$ is sufficiently large and the order is more than 
or equal to $4g$ 
then the order determines the conjugacy class of the periodic map 
up to power. 
The first author \cite{Hirose} showed the same type of result 
when the order is more than or equal to $3g$. 
In this paper, we investigate on the minimum $M$ 
satisfying the following condition: 
{\em if the genus $g$ is sufficiently large and $n > Mg$ (or $n \geq Mg$) 
then the order determines the conjugacy class of the periodic map 
up to power.\/}

\begin{theorem}\label{thm:main}
Let $g > 30$, and $n > 8g/3$.  
If there is a periodic map of $\Sigma_g$ of order $n$, then 
this map is unique up to conjugacy and power. 
On the other hand, let $N$ be any positive integer. 
There is $g > N$ such that there are periodic maps of $\Sigma_g$ of 
order $8g/3$ 
which are not conjugate up to power each other. 
\end{theorem}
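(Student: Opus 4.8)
The plan is to pass to the classical description of periodic maps by cyclic branched coverings. A periodic map $f$ of $\Sigma_g$ of order $n$ yields a branched covering $\Sigma_g \to \Sigma_g/\langle f\rangle = \Sigma_h$, whose combinatorial data (the \emph{total valency}) consists of the quotient genus $h$ and a tuple $(\beta_1,\dots,\beta_k)$ in $\mathbb{Z}/n$ attached to the branch points, with $\sum_i\beta_i = 0$ and, when $h=0$, $\langle\beta_1,\dots,\beta_k\rangle = \mathbb{Z}/n$; the order $m_i$ of $\beta_i$ is the ramification index there, and $\lcm(m_1,\dots,m_k)=n$ when $h=0$. Two periodic maps are conjugate exactly when these data agree after reordering the branch points, and conjugate up to power exactly when, moreover, one tuple is obtained from the other by multiplying through by a unit of $\mathbb{Z}/n$. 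Thus the theorem becomes the problem of counting orbits of admissible total valencies under these operations.

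For the first assertion I would begin with Riemann--Hurwitz, $2g-2 = n\big(2h-2+\sum_i(1-1/m_i)\big)$. The bracket equals $(2g-2)/n$, which is positive and, since $n>8g/3$, smaller than $3/4$; hence $h=0$ and $2 < \sum_i(1-1/m_i) < 11/4$, and as each term lies in $[1/2,1)$ this forces $3\le k\le 5$. The next step is to enumerate all signatures $(0;m_1,\dots,m_k)$ meeting these bounds and, for each, to decide which orders $n>8g/3$ it actually realizes: most carry no admissible valency tuple at all (the generating and sum-zero conditions fail), and many others would force $n$ beyond Wiman's bound $4g+2$. After this pruning the surviving ``large order'' families are all of the $k=3$ shape $(0;a,b,\lcm(a,b))$ with $a\in\{2,3,4\}$, each realizing only finitely many orders $n>8g/3$ (an explicit function of $g$ once the small parameters are fixed); the genuinely $k=4$ or $k=5$ configurations, and a handful of small-genus coincidences among the families, all live on bounded genus and are exactly what the hypothesis $g>30$ disposes of. For a given family one normalizes a generator of order $n$ to $1$, which essentially determines the remaining entries, and the uniqueness-up-to-unit of the last entry is an elementary if fiddly congruence computation.

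For the second assertion, first note that $8g/3\in\mathbb{Z}$ forces $g=3m$; put $n=8m$. I would use the single signature $(0;4,n,n)=(0;4,8m,8m)$, which by Riemann--Hurwitz lives on $\Sigma_{3m}=\Sigma_g$ and has order $\lcm(4,8m)=8m=n$. The only elements of order $4$ in $\mathbb{Z}/8m$ are $2m$ and $6m$, so after scaling one order-$n$ entry to $1$ there are exactly two candidate total valencies, $(2m,\,1,\,-2m-1)$ and $(6m,\,1,\,-6m-1)$; one checks directly that $-2m-1$ and $-6m-1$ are units, so both are admissible. The claim is that for $m$ odd these lie in distinct orbits under multiplication by units together with the transposition of the two order-$n$ entries, hence define periodic maps of $\Sigma_g$ of order $8g/3$ that are not conjugate up to power. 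Indeed, a unit carrying the order-$4$ entry $2m$ to $6m$ must be $\equiv 3\pmod 4$, whereas matching up the other entries (also allowing the transposition) forces the multiplying unit to be $\equiv 1\pmod 4$ --- here oddness of $m$ enters, through $2m\equiv 2\pmod 4$ --- and these conditions are incompatible. Choosing $m$ odd with $m>N/3$ gives the required $g=3m>N$. (For $m$ even these two data do become equivalent, via the unit $-6m-1$, which is why the statement only asserts that such $g$ exist.)

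The main obstacle is the positive direction: carrying out the complete enumeration of signatures for $n>8g/3$ --- deciding realizability case by case, controlling the $k=4,5$ possibilities, and pinning down the bounded-genus exceptions that $g>30$ must absorb --- and then, family by family, the congruence arguments showing the admissible valency tuples form a single orbit under multiplication by units and reordering. By comparison the counterexample is short once $(0;4,n,n)$ is singled out as the critical signature; its only delicate ingredient is the argument modulo $4$ showing the two orbits stay apart when $m$ is odd.
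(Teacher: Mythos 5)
Your translation into total--valency data, your Riemann--Hurwitz bookkeeping, and above all your counterexample are exactly what the paper does: for $g=3m$ with $m$ odd the paper's remark exhibits the same two maps on the signature $(0;4,8m,8m)$ (its valencies $\frac14+\frac1{16k+8}+\frac{12k+5}{16k+8}$ and $\frac34+\frac1{16k+8}+\frac{4k+1}{16k+8}$ are your tuples $(2m,1,-2m-1)$ and $(6m,1,-6m-1)$ with $m=2k+1$), and your mod~$4$ incompatibility argument correctly supplies the verification the paper only asserts. That half of your proposal is complete.

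The uniqueness half, however, has a genuine gap at its central step. From $2<\sum_i(1-1/m_i)<11/4$ you conclude $3\le k\le 5$ and then assert that the surviving $k=4,5$ configurations ``all live on bounded genus.'' This does not follow from the numerology you have established: for instance the signature $(0;2,2,3,m)$ gives $2g-2=n\left(\frac23-\frac1m\right)$, hence $n$ close to $3(g-1)>8g/3$ for arbitrarily large $g$, so infinitely many $k=4$ signatures pass every bound you derive and are killed only by the finer realizability constraints (the sum-zero, generation, and Harvey lcm conditions). Ruling all of these out uniformly in $g$ is a theorem, not an observation --- the paper does it by citing Kasahara's irreducibility result, which reduces immediately to three branch points. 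Likewise, for $k=3$ your plan reduces to ``an explicit function of $g$ once the small parameters are fixed'' and ``fiddly congruence computations,'' but the ingredient that makes this finite and clean --- the two-sided bound $\frac{2n_1}{n_1-1}g\le n\le\frac{2n_1}{n_1-1}g+n_1$ and its consequence that $8g/3<n<3g$ with $g>30$ forces $n_1=4$ and $(k_2,k_3)\in\{(4,1),(2,1)\}$ --- is exactly what the paper's Section~3 establishes and what your sketch leaves unproved. So the skeleton is right, but the two load-bearing steps (eliminating $k\ge4$ for all large $g$, and pinning $n_1$ to $4$) are asserted rather than demonstrated.
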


We explain the outline of the proof of Theorem \ref{thm:main}. 
By \cite{Kasahara}, the periodic map which satisfies the condition 
of Theorem \ref{thm:main} is irreducible, that is, the orbit surface of this 
periodic map is a 2-sphere with 3 branched points. 
Let $n_1$  be the minimum of the branching indices. 
In \S 3, it is shown that the order is included in one of certain disjoint
ranges which is determined solely by the value of $n_1$ when the genus is sufficiently large. 
By using this result, we observe that $n_1$ should be 
at most $4$ under the condition in Theorem \ref{thm:main}. 
In \S 4, we discuss the uniqueness of periodic map by the order 
up to conjugacy and power. 
\par

It seems that our argument in \S 3, especially Theorem \ref{thm:n1-order-1}, is also
useful for several known results on the distribution of periodic maps, in simplifying
the subcases to be considered in their proofs.
As an example, in \S 5, we provide a short and complete proof of Wiman's classical theorem: 
the maximal order of periodic maps of $\Sigma_g$ is $4g+2$. 
\par

After we finished to write this paper, we were 
informed from Professor G. Gromadzki 
about their preprint \cite{BCGH} and that Theorem \ref{thm:main} 
follows directly from their result in \cite{BCGH}.

\section{Preliminaries}
An orientation preserving homeomorphism $f$ from a surface $\Sigma_g$ to itself 
is said to be a {\em periodic map\/}, 
if there is a positive integer $n$ such that $f^n = \mathrm{id}_{\Sigma_g}$. 
The {\em order\/} of $f$ is the smallest positive integer which satisfies the above 
condition. 
Two periodic maps $f$ and $f'$ on $\Sigma_g$ are {\em conjugate\/}, 
if there is an orientation preserving homeomorphism $h$ from $\Sigma_g$ to itself 
such that $f' = h \circ f \circ h^{-1}$. 
In this section, we will review the classification of conjugacy classes of 
periodic maps on surfaces by Nielsen \cite{Nielsen}. 
We follow a description by Smith \cite{Smith} and Yokoyama \cite{Yokoyama}. 

Let $f$ be a periodic map on $\Sigma_g$, whose order is $n$. 
A point $p$ on $\Sigma_g$ is a {\em multiple point\/} of $f$, if 
there is a positive integer $k$ less than $n$ such that $f^k(p) = p$. 
Let $M_f$ be the set of multiple points of $f$. 
The orbit space $\Sigma_g/f$ of $f$ is defined by identifying $x$ in $\Sigma_g$ 
with $f(x)$. Let $\pi_f : \Sigma_g \to \Sigma_g/f$ be the quotient map. 
Then $\pi_f$ is an $n$-fold branched covering ramified at $\pi_f (M_f)$. 
The set $\pi_f(M_f)$ is denoted by $B_f$, and each element of $B_f$ is 
called a {\em branch point\/} of $f$. 
We choose a point $x$ in $\Sigma_g/f - B_f$, and a point $\tilde{x}$ in 
$\pi_f^{-1}(x)$. 
We define a homomorphism $\Omega_f : \pi_1 (\Sigma_g /f - B_f) \to \mathbb{Z}_n$ 
as follows: 
Let $l$ be a loop in $\Sigma_g / f -B_f$ with the base point $x$, and 
$[l]$ the element of $\pi_1 (\Sigma_g/f - B_f)$ represented by $l$. 
Let $\tilde{l}$ be the lift of $l$ on $\Sigma_g$ which begins from $\tilde{x}$. 
There is a positive integer $r$ less than or equal to $n$ such that 
the terminal point of $\tilde{l}$ is $f^r(\tilde{x})$. 
We define $\Omega_f([l]) = r \mod n$. 
Since $\mathbb{Z}_n$ is an Abelian group, the homomorphism $\Omega_f$ 
induces a homomorphism $\omega_f$ from the abelianization of 
$\pi_1 (\Sigma_g/f - B_f)$ to $\mathbb{Z}_n$. 
The abelianization of $\pi_1 (\Sigma_g/f - B_f)$ is $H_1(\Sigma_g/f - B_f)$, 
therefore $\omega_f$ is a homomorphism from $H_1(\Sigma_g/f - B_f)$ to $\mathbb{Z}_n$. 
For each point of $B_f = \{ Q_1, \ldots, Q_b \}$, let $D_i$ be a disk in 
$\Sigma_g / f$, which contains $Q_i$ in its interior and is sufficiently 
small so that no other points of $B_f$ are in $D_i$. 
Let $S_{Q_i}$ be the boundary of $D_i$ with clockwise orientation. 
\begin{theorem}\cite[\S 11]{Nielsen} \label{thm:Nielsen}
%
Two periodic maps $f$ and $f'$ on $\Sigma_g$ are conjugate to each other 
if and only if the following three conditions are satisfied. 

\noindent
$(1)$ The order of $f$ is equal to the order of $f'$.  

\noindent
$(2)$ The number of elements in $B_f$ is equal to that of $B_{f'}$. 

\noindent
$(3)$ After renumbering the elements of $B_{f'}$, we have $\omega_f (S_{Q_i}) = \omega_{f'} (S_{Q_i})$ 
for each $i$. 
\end{theorem}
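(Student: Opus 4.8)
I would prove the two implications separately; essentially all of the content is in the ``if'' direction. \emph{Necessity.} Suppose $f'=h\circ f\circ h^{-1}$ for an orientation preserving homeomorphism $h$ of $\Sigma_g$. Then $h$ carries $\langle f\rangle$-orbits to $\langle f'\rangle$-orbits and $M_f$ to $M_{f'}$, so it descends to an orientation preserving homeomorphism $\bar h\colon\Sigma_g/f\to\Sigma_g/f'$ with $\bar h(B_f)=B_{f'}$, which gives $(1)$ and $(2)$ at once. Renumber $B_{f'}$ so that $\bar h(Q_i)=Q_i'$. Then $\bar h(S_{Q_i})$ is a small clockwise loop around $Q_i'$, hence freely homotopic to $S_{Q_i'}$, and transporting lifts through $h$ (which conjugates the deck action of $\langle f\rangle$ to that of $\langle f'\rangle$) gives $\Omega_{f'}(\bar h_{\#}[S_{Q_i}])=\Omega_f([S_{Q_i}])$; passing to $H_1$ yields $(3)$.

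\emph{Sufficiency.} Assume $(1)$--$(3)$. Since $f$ is orientation preserving, $\Sigma_g/f$ is a closed oriented surface, of some genus $\mathfrak h$; moreover $\langle f\rangle$ acts freely on $\Sigma_g-M_f$, so $\Sigma_g-M_f\to\Sigma_g/f-B_f$ is a regular $\mathbb{Z}_n$-covering and $\omega_f$ is surjective. The same holds for $f'$, with orbit genus $\mathfrak h'$. Inspecting the covering near a branch point shows the ramification index over $Q_i$ equals the order of $\omega_f(S_{Q_i})$ in $\mathbb{Z}_n$, so $(3)$ makes the ramification data of $f$ and $f'$ agree, and Riemann--Hurwitz then forces $\mathfrak h=\mathfrak h'$. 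Fix an orientation preserving homeomorphism $\bar h\colon\Sigma_g/f\to\Sigma_g/f'$ with $\bar h(Q_i)=Q_i'$ for all $i$, restricted to the complements of the branch points. The crux is that one can further compose $\bar h$ with a mapping class of $\Sigma_g/f-B_f$ fixing every branch point so that afterwards $\omega_{f'}\circ\bar h_*=\omega_f$ as maps $H_1(\Sigma_g/f-B_f)\to\mathbb{Z}_n$. Granting this, $\bar h_{\#}$ carries $\ker\Omega_f$ to $\ker\Omega_{f'}$, so $\bar h$ lifts to a homeomorphism $\Sigma_g-M_f\to\Sigma_g-M_{f'}$, which can be chosen to intertwine $f$ and $f'$; and near each branch point the covering is a union of copies of $z\mapsto z^{e_i}$ on a disk, over which a boundary-preserving homeomorphism of the punctured disk extends across the centre, so the lift extends to a homeomorphism $h$ of $\Sigma_g$ with $h\circ f=f'\circ h$.

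It remains to establish the crux, which is the heart of the argument. Decompose $H_1(\Sigma_g/f-B_f)=A\oplus B$, where $A$ is the subgroup generated by the boundary classes $S_{Q_i}$ (free of rank $b-1$, with $b$ the number of branch points) and $B\cong\mathbb{Z}^{2\mathfrak h}$ is a complementary ``handle'' summand carrying the intersection form. By $(3)$ the homomorphisms $\omega_f$ and $\omega_{f'}\circ\bar h_*$ share the restriction $\sigma$ to $A$; write their restrictions to $B$ as $\alpha,\alpha'\colon B\to\mathbb{Z}_n$. A mapping class of $\Sigma_g/f-B_f$ fixing each branch point acts on $H_1$ as the identity on $A$; furthermore homeomorphisms supported in a genus-$\mathfrak h$ subsurface realize all of $\mathrm{Sp}(2\mathfrak h,\mathbb{Z})$ on $B$, and point-pushing maps of the branch points realize every ``shear'' $v\mapsto v+\tau(v)$ with $\tau\in\mathrm{Hom}(B,A)$. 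So it suffices to find $M\in\mathrm{Sp}(2\mathfrak h,\mathbb{Z})$ and $\tau\in\mathrm{Hom}(B,A)$ with $\alpha\circ M+\sigma\circ\tau=\alpha'$. Reducing modulo $\mathrm{im}(\sigma)$, this amounts to $\bar\alpha\circ M=\bar\alpha'$ in $\mathrm{Hom}(B,\mathbb{Z}_n/\mathrm{im}(\sigma))$; surjectivity of $\omega_f$ and $\omega_{f'}$ forces $\bar\alpha$ and $\bar\alpha'$ to be surjective onto the finite cyclic group $\mathbb{Z}_n/\mathrm{im}(\sigma)$, and for $\mathfrak h\ge1$ the group $\mathrm{Sp}(2\mathfrak h,\mathbb{Z})$ acts transitively on surjective homomorphisms from $\mathbb{Z}^{2\mathfrak h}$ onto a fixed finite cyclic group (for $\mathfrak h=0$ there is nothing to adjust). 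Once such an $M$ is fixed, $\alpha'-\alpha\circ M$ takes values in $\mathrm{im}(\sigma)$, and since $\sigma$ maps $A$ onto $\mathrm{im}(\sigma)$ with $B$ free we can solve for $\tau$. The step I expect to be the main obstacle is precisely this last one: pinning down exactly which automorphisms of $H_1(\Sigma_g/f-B_f)$ arise from mapping classes fixing the branch points, and invoking the transitivity of $\mathrm{Sp}(2\mathfrak h,\mathbb{Z})$ on unimodular vectors over a finite cyclic quotient; the remaining ingredients (Riemann--Hurwitz bookkeeping, lifting the covering, extending over branch points) are routine.
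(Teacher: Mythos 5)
The paper offers no proof of this statement: it is quoted directly from Nielsen \cite[\S 11]{Nielsen} (see also Smith and Yokoyama), so there is no in-paper argument to compare yours against. Judged on its own, your proof follows the standard modern route --- classify cyclic branched covers by the induced surjection $H_1(\Sigma_g/f-B_f)\to\mathbb{Z}_n$, and reduce conjugacy of the actions to moving one such surjection to the other by the mapping class group of the punctured orbit surface --- and the skeleton is sound: the necessity argument is complete, the Riemann--Hurwitz matching of orbit genera is right, and the reduction of the crux to the equation $\alpha\circ M+\sigma\circ\tau=\alpha'$ via the splitting $H_1=A\oplus B$ is the correct way to organize the sufficiency. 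The two places where you lean on unproved assertions are exactly the ones you flag: (a) that orientation-preserving mapping classes fixing each puncture realize, on $H_1$, the full group generated by $\mathrm{Sp}(2\mathfrak h,\mathbb{Z})$ on $B$ together with all shears $v\mapsto v+\tau(v)$, $\tau\in\mathrm{Hom}(B,A)$ (true; the shears come from point-pushing, whose homological effect is $v\mapsto v+\hat\imath(\gamma,v)\,S_{Q_i}$, and nondegeneracy of the intersection form on $B$ gives all of $\mathrm{Hom}(B,\mathbb{Z})$); and (b) transitivity of $\mathrm{Sp}(2\mathfrak h,\mathbb{Z})$ on vectors of $(\mathbb{Z}_m)^{2\mathfrak h}$ whose entries generate $\mathbb{Z}_m$ (also true, via surjectivity of $\mathrm{Sp}(2\mathfrak h,\mathbb{Z})\to\mathrm{Sp}(2\mathfrak h,\mathbb{Z}_m)$ and a local computation). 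With those two standard facts supplied, your argument is a complete and correct proof; it is essentially Nielsen's theorem rederived through the covering-space/mapping-class-group formalism rather than through Nielsen's original analysis.
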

Let $\theta_i = \omega_f(S_{Q_i})$ for each $i$. 
By the above Theorem, the data 
$[g, n ; \theta_1, \ldots, \theta_b ]$ determines a periodic map 
up to conjugacy. 
The following proposition shows a sufficient and necessary condition for 
a data $[g, n ; \theta_1, \ldots, \theta_b ]$ to correspond to a periodic map. 
\begin{prop} \label{prop:cond-data}
%
There is a periodic map with the data $[g, n ; \theta_1, \ldots, \theta_b]$ 
if and only if the following conditions are satisfied. 

\noindent
$(1)$ $\theta_1 + \cdots + \theta_b \equiv 0 \mod n$. 

\noindent
$(2)$ Let  $n_i = n/\gcd \{ \theta_i, n \}$, then there exists a non-negative integer $g'$ which 
satisfies 
$$2g-2=n \left( 2 g' -2 + \sum
\left( 1-\frac{1}{n_i} \right) \right), $$
where $i$ runs through the branch points. 

\noindent
$(3)$ If $g' = 0$, then 
$\gcd \{\theta_1, \ldots, \theta_b \} \equiv 1 \mod n$. 

\end{prop}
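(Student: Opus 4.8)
I would prove the proposition by running, in both directions, the classical dictionary between $n$-fold cyclic branched coverings of a closed surface and surjective homomorphisms onto $\mathbb{Z}_n$, and checking that conditions (1)--(3) are exactly what this dictionary demands. First I would dispose of necessity. Suppose $f$ is a periodic map of order $n$ with data $[g,n;\theta_1,\dots,\theta_b]$, and let $g'$ be the genus of the orbit surface $\Sigma_g/f$; this is a non-negative integer. For (1), note that with the clockwise orientation the $1$-cycle $S_{Q_1}+\cdots+S_{Q_b}$ bounds the complement in $\Sigma_g/f$ of the open disks $D_1,\dots,D_b$, hence is null-homologous in $\Sigma_g/f-B_f$; applying the homomorphism $\omega_f$ gives $\theta_1+\cdots+\theta_b\equiv 0\bmod n$. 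For (2), the key is the local picture over a branch point: if $\tilde{Q}$ lies over $Q_i$, its stabilizer in the cyclic group $\langle f\rangle$ is generated by the monodromy of a small meridian of $Q_i$, which by the very definition of $\Omega_f$ is the power $f^{\theta_i}$; this element has order $n_i=n/\gcd\{\theta_i,n\}$, so there are $n/n_i$ points over $Q_i$, each of branching index $n_i$. Since $\Sigma_g-M_f\to\Sigma_g/f-B_f$ is an honest $n$-fold covering, comparing Euler characteristics gives $2-2g=n(2-2g')-\sum_i n(1-\tfrac{1}{n_i})$, which rearranges into the Riemann--Hurwitz identity of (2). Finally, if $g'=0$ then $H_1(\Sigma_g/f-B_f)$ is generated by the meridians $S_{Q_i}$, so the image of $\omega_f$ is the subgroup of $\mathbb{Z}_n$ generated by $\theta_1,\dots,\theta_b$; since $\Sigma_g$ is connected, $\Omega_f$, hence $\omega_f$, is onto, which is condition (3).

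For sufficiency I would reverse each step. Start with a closed surface $\Sigma_{g'}$ of genus $g'$ carrying $b$ distinct points $Q_1,\dots,Q_b$, and define a homomorphism $\omega\colon H_1(\Sigma_{g'}-\{Q_i\})\to\mathbb{Z}_n$ by $S_{Q_i}\mapsto\theta_i$; condition (1) is precisely the compatibility needed with the single relation $\sum_i S_{Q_i}=0$ among the meridians, so $\omega$ exists. I then choose the images of the genus generators so that $\omega$ becomes surjective: this can always be arranged when $g'\ge 1$, and is automatic from (3) when $g'=0$. Pulling $\omega$ back along the abelianization gives a surjection $\pi_1(\Sigma_{g'}-\{Q_i\})\to\mathbb{Z}_n$, hence a connected $n$-fold covering of $\Sigma_{g'}-\{Q_i\}$; filling in the punctures produces a closed surface $\widehat{\Sigma}$ with an $n$-fold cyclic branched covering $\widehat{\Sigma}\to\Sigma_{g'}$ whose deck group is generated by an orientation preserving periodic map $f$ of order exactly $n$, the exactness coming from surjectivity of $\omega$. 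The branch points of $f$ are the $Q_i$, with branching index $n/\gcd\{\theta_i,n\}=n_i$, so Riemann--Hurwitz computes the genus of $\widehat{\Sigma}$ to be the integer $g$ pinned down by (2); thus $\widehat{\Sigma}\cong\Sigma_g$. By construction $\omega_f(S_{Q_i})=\theta_i$ for every $i$, so $f$ realizes the data $[g,n;\theta_1,\dots,\theta_b]$, as required.

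The step I expect to need the most care is the local analysis that both (2) and (3) rest on: identifying the monodromy of the covering around $Q_i$ with multiplication by $\theta_i$ in $\mathbb{Z}_n$, so that the branching index there is exactly $n_i=n/\gcd\{\theta_i,n\}$ and, in the sufficiency direction, that the fibre over $Q_i$ assembles correctly after the punctures are filled. Getting this right is a matter of keeping the orientation conventions consistent throughout---the clockwise orientation chosen for $S_{Q_i}$, the fixed generator of $\mathbb{Z}_n$, and the orientation of $\Sigma_g$ preserved by $f$---after which the Euler characteristic count and the covering-space construction are routine. Since Proposition~\ref{prop:cond-data} is a standard fact, an acceptable alternative to writing this out in full is to cite the treatments of cyclic branched coverings underlying the descriptions by Smith and by Yokoyama referred to above.
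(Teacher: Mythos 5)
Your proposal is correct and follows essentially the same route as the paper, which merely sketches the argument: necessity of (1) from the null-homologous sum of meridians and the homomorphism property of $\omega_f$, of (2) from the Riemann--Hurwitz formula, and of (3) from surjectivity of $\omega_f$, with sufficiency via the Hurwitz existence theorem for branched coverings. Your write-up simply supplies the details (the local monodromy/branching-index identification and the covering-space construction) that the paper delegates to the references.
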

The necessity of three conditions in the above Proposition are shown 
as follows.  
(1) follows from the fact that $\omega_f$ is a homomorphism and 
$S_{Q_1} + \cdots + S_{Q_b}$ is null-homologous, 
(2) is the Riemann-Hurwitz formula, and (3) follows from the fact 
that $\omega_f$ is a surjection.
The sufficiency of these conditions follows from the existence theorem of 
a branched covering space by Hurwitz \cite{Hurwitz}. 
The number $n_i$ is called the {\em branching index\/} of $Q_i$. 

In the following, we will use the expression 
$(n, \theta_1/n + \cdots + \theta_b/n)$ 
in place of $[g,n;\theta_1, \ldots, \theta_b]$. 
This data $(n, \theta_1/n + \cdots + \theta_b/n)$ 
is called the {\em total valency\/}, which is introduced by 
Ashikaga and Ishizaka \cite{Ashikaga-Ishizaka}. 
In the above data, we call $\theta_i/n$ the {\em valency\/} 
of $Q_i$, and often rewrite this by an irreducible fraction. 
We remark that the denominator of the reduced $\theta_i/n$ 
is equal to the branching index $n_i$ of $Q_i$, and the  numerator of the reduced 
$\theta_i/n$ is well-defined modulo $n_i$. 
If $k$  is an integer prime to $n$ and $f = (n, m_1/n_1 + \cdots + m_b/n_b)$, then 
$f^k = (n, (k^* \cdot m_1)/n_1  + \cdots + (k^* \cdot m_b)/n_b)$ where 
$k^*$ is an integer such that $k \cdot k^* \equiv 1 $ mod $n$, 
and $k^* \cdot m_i$ is the remainder of $k^* m_i$ modulo $n_i$.  
\section{A discussion on branching indices}
Let $f$ be an order $n$ periodic map of $\Sigma_g$ whose orbit 
space $\Sigma_g / f = \mathbb{S}^2(n_1, n_2, n_3)$. 
We assume that $n_1 \leq n_2 \leq n_3$. 
By the Riemann-Hurwitz formula, we see 
\begin{equation}\label{eqn:R-H}
2(g-1) = n \left( 1 - \left( \frac{1}{n_1} + \frac{1}{n_2} + \frac{1}{n_3} 
\right) \right). 
\end{equation}
The branching indices $n_1, n_2, n_3$ satisfy the Harvey's $\lcm$ condition
\cite{Harvey}, 
that is, 
$$\lcm\{ n_1, n_2 \} = \lcm\{ n_2, n_3 \} = \lcm\{ n_3, n_1 \} = n.$$ 

\begin{lemma}\label{lem:k2k3}
%
Let $k_2 = n/n_2$, $k_3 = n/n_3$, then we see: \\
{\rm (i)} $n = \frac{n_1}{n_1 - 1} (2g+(k_2+k_3-2))$, \\
{\rm (ii)} $k_2 \geq k_3$, \\
{\rm (iii)} $k_2, k_3$ are divisors of $n_1$, \\
{\rm (iv)} $\gcd \{ k_2, k_3 \} = 1$, \\
{\rm (v)} $k_2+k_3 \leq n_1+1$. 
\end{lemma}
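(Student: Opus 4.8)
The plan is to derive all five items from the Riemann--Hurwitz formula \eqref{eqn:R-H} together with Harvey's $\lcm$ condition. First, I would rewrite \eqref{eqn:R-H} as
$$2(g-1) = n - \frac{n}{n_1} - \frac{n}{n_2} - \frac{n}{n_3} = n - \frac{n}{n_1} - k_2 - k_3,$$
and solve for $n$ to obtain (i); note that this computation already records that $k_2$ and $k_3$ are positive integers, since the $\lcm$ condition forces $n_2 \mid n$ and $n_3 \mid n$. Part (ii) is then immediate from the assumption $n_2 \leq n_3$, which gives $k_2 = n/n_2 \geq n/n_3 = k_3$.

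For (iii), I would use $n = \lcm\{n_1, n_2\} = n_1 n_2 / \gcd\{n_1, n_2\}$, so that $k_2 = n/n_2 = n_1 / \gcd\{n_1, n_2\}$ is an integer dividing $n_1$; the same computation with $n_3$ in place of $n_2$ handles $k_3$. For (iv), I would invoke the elementary identity $\gcd\{n/d, n/e\} = n/\lcm\{d, e\}$, valid for any divisors $d, e$ of $n$; applied with $d = n_2$, $e = n_3$ and using $\lcm\{n_2, n_3\} = n$, it yields $\gcd\{k_2, k_3\} = n/n = 1$.

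Finally, for (v) I would combine (iii) and (iv): since $k_2$ and $k_3$ are coprime divisors of $n_1$, their product $k_2 k_3$ also divides $n_1$, hence $k_2 k_3 \leq n_1$. If $k_3 = 1$, then $k_2 + k_3 = k_2 + 1 \leq n_1 + 1$ because $k_2 \mid n_1$. If $k_3 \geq 2$, then by (ii) also $k_2 \geq k_3 \geq 2$, so $(k_2 - 1)(k_3 - 1) \geq 1$, i.e.\ $k_2 + k_3 \leq k_2 k_3 \leq n_1 < n_1 + 1$. The only step that is not purely formal is this last inequality, and as just sketched it reduces to the observation that coprime divisors multiply to a divisor, followed by a one-line case split; I do not expect any real obstacle here.
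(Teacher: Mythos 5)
Your proof is correct, and for parts (i)--(iv) it follows essentially the same route as the paper: (i) is the same rearrangement of the Riemann--Hurwitz formula, (ii) is immediate from $n_2\leq n_3$, and (iii), (iv) both rest on the same fact the paper uses, namely that $\lcm\{n_1,n_2\}=n$ (resp.\ $\lcm\{n_2,n_3\}=n$) forces the corresponding cofactors to be coprime; you just phrase it via the identity $\gcd\{n/d,n/e\}=n/\lcm\{d,e\}$ rather than via the paper's equation $k_1n_1=k_2n_2$. The only genuine divergence is in (v): the paper splits into the cases $n_1=2$, $k_2=n_1$, and $k_2\neq n_1$, using in the last case that a proper divisor of $n_1$ is at most $n_1/2$ together with $k_3\leq k_2$; you instead observe that coprime divisors of $n_1$ have product dividing $n_1$, so $k_2k_3\leq n_1$, and then conclude from $k_2+k_3\leq k_2k_3$ when both exceed $1$. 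Your version is slightly cleaner in that it avoids the separate treatment of $n_1=2$ and does not need (ii), while the paper's version makes explicit that equality $k_2+k_3=n_1+1$ occurs exactly when $(k_2,k_3)=(n_1,1)$, a fact that is convenient to have visible for the later case analysis; both arguments are equally valid.
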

\begin{proof}
(i) is valid by (\ref{eqn:R-H}). \\
(ii) is valid by $n_2 \leq n_3$. \\
(iii) Let $k_1 = n/n_1$. 
Since $k_1 n_1 = k_2 n_2 = n = \lcm \{n_1, n_2 \}$, $k_1$ and $k_2$ are prime 
each other. 
By the equation $k_1 n_1 = k_2 n_2$, 
we see that $k_2$ is a divisor of $n_1$. 
By the same way, we see that $k_3$ is a divisor of $n_1$. \\
(iv) Since $k_2 n_2 = k_3 n_3 = n = \lcm \{n_2, n_3 \}$, 
$k_2$ and $k_3$ are prime each other. \\
(v) If $n_1=2$, then we see $k_2,k_3 \leq 2$ by (iii). 
Since $k_3 = 1$ by (iv), we see $k_2 + k_3 \leq n_1 + 1$. 
We assume $n_1 \geq 3$. 
If $k_2=n_1$ then $k_3=1$ by (iii) (iv). 
Therefore $k_2 + k_3 = n_1 + 1$. 
If $k_2 \not= n_1$ then $k_2 \leq n_1/2$. 
Moreover $k_3 \leq k_2$ by (ii), hence, 
we see $k_2 + k_3 \leq 2 k_2 \leq n_1 < n_1 + 1$. 
\end{proof}

\begin{theorem}\label{thm:n1-order-1}
%
The inequality 
$\displaystyle \frac{2n_1}{n_1-1} g \leq n \leq 
\frac{2n_1}{n_1-1}g + n_1$ 
is valid. 
\end{theorem}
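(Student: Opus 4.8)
The plan is to read off both inequalities directly from Lemma \ref{lem:k2k3}, in particular from the identity in part (i),
\[
n = \frac{n_1}{n_1-1}\bigl(2g + (k_2+k_3-2)\bigr),
\]
by bounding the quantity $k_2+k_3-2$ above and below. Since $n_1 \geq 2$ the factor $\frac{n_1}{n_1-1}$ is positive, so the estimate on $n$ is monotone in $k_2+k_3$.

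For the lower bound, I would use only that $k_2 = n/n_2$ and $k_3 = n/n_3$ are positive integers, hence $k_2+k_3 \geq 2$, i.e. $k_2+k_3-2 \geq 0$. Substituting into (i) gives $n \geq \frac{n_1}{n_1-1}\cdot 2g = \frac{2n_1}{n_1-1}g$.

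For the upper bound, I would invoke Lemma \ref{lem:k2k3}(v), which gives $k_2+k_3 \leq n_1+1$, hence $k_2+k_3-2 \leq n_1-1$. Plugging this into (i) yields
\[
n \leq \frac{n_1}{n_1-1}\bigl(2g + (n_1-1)\bigr) = \frac{2n_1}{n_1-1}g + \frac{n_1(n_1-1)}{n_1-1} = \frac{2n_1}{n_1-1}g + n_1,
\]
which is exactly the claimed right-hand inequality. Combining the two estimates finishes the proof.

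There is no real obstacle here: the entire content has been pushed into Lemma \ref{lem:k2k3}, whose part (v) is the only nontrivial ingredient (and it in turn rests on parts (ii)--(iv), i.e. on the Harvey $\lcm$ condition forcing $k_2,k_3$ to be coprime divisors of $n_1$ with $k_2 \geq k_3$). So the "hard part," such as it is, is already done; the theorem itself is a one-line substitution once (i) and (v) are in hand. If anything, the only point to be slightly careful about is that the bound in (i) is stated as an identity, so both directions of the theorem come from the same formula with no loss.
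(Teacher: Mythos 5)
Your proof is correct and follows essentially the same route as the paper: the upper bound is obtained exactly as in the paper from Lemma \ref{lem:k2k3} (i) and (v), and your lower bound via $k_2+k_3\geq 2$ in the identity (i) is just an algebraic rearrangement of the paper's use of $n_2,n_3\leq n$ in the Riemann--Hurwitz formula. No gaps.
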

\begin{proof}
Since $n_2, n_3 \leq n$, 
$$
2(g-1) = n \left( 1 - \left( \frac{1}{n_1} + \frac{1}{n_2} + \frac{1}{n_3} 
\right) \right) \leq 
n \left(1 - \left( \frac{1}{n_1} + \frac{2}{n} \right) \right) 
= n \left(\frac{n_1 - 1}{n_1} \right) -2 
$$
by the equation (\ref{eqn:R-H}), 
hence, 
$$
n \geq \frac{2n_1}{n_1 - 1} g.
$$
On the other hand, by (i) (v) of the previous Lemma, we see 
$$n = \frac{n_1}{n_1 - 1} (2g+(k_2+k_3-2)) \leq \frac{2n_1}{n_1-1} g + n_1.$$
\end{proof}

\begin{theorem}\label{thm:n1-order-2}
%
For an integer $N \geq 3$, we assume $g > (N-1)N(N+1)/2$. 
Then 
$$
n_1 = N
\Longleftrightarrow
\frac{2N}{N-1} g \leq n < \frac{2(N-1)}{N-2} g. 
$$
\end{theorem}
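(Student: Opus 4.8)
The plan is to prove the two implications separately; both rely on the sharp two-sided estimate of Theorem \ref{thm:n1-order-1}, supplemented in the harder direction by one a priori upper bound for $n_1$. For the implication $n_1 = N \Rightarrow \frac{2N}{N-1}g \le n < \frac{2(N-1)}{N-2}g$, the lower inequality is precisely the lower bound of Theorem \ref{thm:n1-order-1} with $n_1 = N$. For the upper inequality, Theorem \ref{thm:n1-order-1} gives $n \le \frac{2N}{N-1}g + N$, so it suffices to check $\frac{2N}{N-1}g + N < \frac{2(N-1)}{N-2}g$; a one-line computation gives $\frac{2(N-1)}{N-2}g - \frac{2N}{N-1}g = \frac{2g}{(N-1)(N-2)}$, which reduces the claim to $g > \frac{N(N-1)(N-2)}{2}$, and this follows from the hypothesis since $N+1 > N-2$.

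For the converse, assume $\frac{2N}{N-1}g \le n < \frac{2(N-1)}{N-2}g$. If $n_1 \le N-1$, then since $x \mapsto 2x/(x-1)$ is decreasing on $x > 1$, the lower bound of Theorem \ref{thm:n1-order-1} gives $n \ge \frac{2n_1}{n_1-1}g \ge \frac{2(N-1)}{N-2}g$, contradicting the assumption on $n$; hence $n_1 \ge N$. Now suppose $n_1 \ge N+1$; I will derive a contradiction. First, combining the upper bound $n \le \frac{2n_1}{n_1-1}g + n_1$ of Theorem \ref{thm:n1-order-1} with $n \ge \frac{2N}{N-1}g$ and clearing denominators yields
\begin{equation}\label{eqn:plan-key}
2g(n_1-N) \le n_1(n_1-1)(N-1).
\end{equation}
Second, I need an a priori bound $n_1 \le N^2$: writing $k_1 = n/n_1$ and using $k_1 \ge k_2 \ge k_3$ (which holds because $n_1 \le n_2 \le n_3$) together with the Riemann-Hurwitz identity $k_1 + k_2 + k_3 = n - 2g + 2$ obtained from (\ref{eqn:R-H}), we get $n - 2g + 2 \le 3k_1 = 3n/n_1$, hence $n_1 \le 3n/(n-2g+2)$; since the right-hand side is decreasing in $n$, substituting $n \ge \frac{2N}{N-1}g$ gives $n_1 \le \frac{3Ng}{g+N-1} < 3N$, so $n_1 \le 3N-1$, and one checks $3N-1 \le N^2$ for every $N \ge 3$.

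Finally I combine the two inequalities. The quadratic $x^2 - (N^2+N+1)x + N^2(N+1)$ has roots $N+1$ and $N^2$, so for $N+1 \le n_1 \le N^2$ one has $n_1(n_1-1) \le N(N+1)(n_1-N)$; together with the hypothesis in the form $2g > (N-1)N(N+1)$ this gives $n_1(n_1-1)(N-1) \le (N-1)N(N+1)(n_1-N) < 2g(n_1-N)$, contradicting (\ref{eqn:plan-key}). Hence $n_1 \ge N+1$ is impossible and $n_1 = N$. The main obstacle is the a priori bound $n_1 \le N^2$: without the ordering $n_1 \le n_2 \le n_3$ the index $n_1$ could be as large as $2g+1$, and Theorem \ref{thm:n1-order-1} alone never rules out large $n_1$; it is precisely the inequality $k_2+k_3 \le 2k_1$ forced by this ordering that confines $n_1$ to the range $[N,3N-1]$, on which the elementary quadratic estimate applies.
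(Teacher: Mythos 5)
Your proposal is correct and follows essentially the same route as the paper's own proof: the forward implication via the two-sided bound of Theorem \ref{thm:n1-order-1} plus the genus hypothesis, and the converse by first ruling out $n_1 \le N-1$, then establishing the a priori bound $n_1 \le 3N-1 \le N^2$ from the ordering $n_1 \le n_2 \le n_3$ (the paper states this as an inner lemma), and finally combining the quadratic estimate $n_1(n_1-1) \le N(N+1)(n_1-N)$ with the cleared-denominator inequality $2g(n_1-N) \le (N-1)n_1(n_1-1)$ to contradict $g > (N-1)N(N+1)/2$. The only differences are cosmetic (you phrase the bound $n_1 < 3N$ via $k_1+k_2+k_3 = n-2g+2$ rather than dividing the Riemann--Hurwitz relation by $n$, arriving at the same quantity $3Ng/(g+N-1)$).
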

\begin{remark}
1. Because $(N-1)N(N+1)/2$ is increasing for $N \geq 3$, 
we see that, for any $N'$ such that $3 \leq N' \leq N$, 
$$
n_1 = N'
\Longleftrightarrow
\frac{2N'}{N'-1} g \leq n < \frac{2(N'-1)}{N'-2} g 
$$
under the assumption of the above Theorem. \\
2. In the case where $g = \frac{(N-1)N(N+1)}{2}$, 
there is a periodic map of order $\frac{2N}{N-1}g = N^2(N+1)$ 
such that $n_1 = N+1$ and whose valency data is 
$$
\left( N^2(N+1), \frac{N}{N+1} + \frac{N-1}{N^2} + \frac{1}{N^2(N+1)} 
\right).
$$
\end{remark}
\begin{proof}
We assume $n_1 = N$. 
By Theorem \ref{thm:n1-order-1}, 
$\frac{2N}{N-1} g \leq n \leq \frac{2N}{N-1}g + N$. 
By the assumption 
$g > (N-1)N(N+1)/2>(N-2)(N-1)N/2$, 
we see $\frac{2N}{N-1}g + N < \frac{2(N-1)}{N-2}g$. 
Therefore $\frac{2N}{N-1}g \leq n < \frac{2(N-1)}{N-2}g$. 

On the reverse order, we assume 
$\frac{2N}{N-1} g \leq n < \frac{2(N-1)}{N-2} g$. 
By Theorem \ref{thm:n1-order-1}, we see $n \geq \frac{2n_1}{n_1 -1} g$. 
If $n_1 \leq N-1$ then 
$\frac{2 n_1}{n_1 - 1} \geq \frac{2(N-1)}{N-2}$, 
hence $n \geq \frac{2(N-1)}{N-2}g$, 
which contradicts $\frac{2N}{N-1} g \leq n < \frac{2(N-1)}{N-2} g$. 
Therefore $n_1 \geq N$. 

Here, we show the following Lemma. 
\begin{lemma}
When $N \geq 2$, if $n \geq \frac{2N}{N-1}g$ then $n_1 \leq 3N-1$. 
\end{lemma}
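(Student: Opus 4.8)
The plan is to argue by contradiction: assume $n_1 \ge 3N$ and derive a contradiction with the hypothesis $n \ge \frac{2N}{N-1}g$. The natural tool is Theorem~\ref{thm:n1-order-1}, applied with the (large) value $n_1$ in place of the generic symbol: it gives $n \le \frac{2n_1}{n_1-1}g + n_1$. So it suffices to show that $\frac{2n_1}{n_1-1}g + n_1 < \frac{2N}{N-1}g$ whenever $n_1 \ge 3N$, i.e.\ that the upper bound forced by a large $n_1$ sits strictly below the lower bound $\frac{2N}{N-1}g$ coming from the hypothesis. Rearranging, this amounts to
$$
\left( \frac{2N}{N-1} - \frac{2n_1}{n_1-1} \right) g > n_1,
$$
and since $x \mapsto \frac{2x}{x-1}$ is strictly decreasing, the left coefficient is positive for $n_1 > N$; one then just needs $g$ large enough. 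However, the statement of the Lemma carries \emph{no} hypothesis on $g$ beyond what is implicit, so this naive approach cannot be the whole story — the inequality must instead be extracted more carefully from the Riemann--Hurwitz data itself, not from an a~priori size assumption on $g$.

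The correct approach, then, is to work directly with equation~(\ref{eqn:R-H}) and Lemma~\ref{lem:k2k3}. Write $k_2 = n/n_2$, $k_3 = n/n_3$ as in Lemma~\ref{lem:k2k3}, so that by part~(i), $n = \frac{n_1}{n_1-1}(2g + k_2 + k_3 - 2)$. Combining this with the hypothesis $n \ge \frac{2N}{N-1}g$ yields
$$
\frac{n_1}{n_1-1}(2g + k_2 + k_3 - 2) \ge \frac{2N}{N-1} g.
$$
If $n_1 \ge 3N$ we want a contradiction; the point is that $\frac{n_1}{n_1-1}$ is then close to $1$, so the left side is roughly $2g + k_2 + k_3 - 2$, while the right side is $\frac{2N}{N-1}g = 2g + \frac{2}{N-1}g$, which forces $k_2 + k_3$ to be of order $\frac{2}{N-1}g$, hence large. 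But by Lemma~\ref{lem:k2k3}(v) we have $k_2 + k_3 \le n_1 + 1$, and — crucially — $k_2, k_3$ are divisors of $n_1$ that are coprime (parts (iii),(iv)), so in fact $k_2 + k_3 \le n_1 + 1$ with equality only when $\{k_2,k_3\}=\{n_1,1\}$. Feeding $k_2 + k_3 \le n_1 + 1$ back into the displayed inequality gives a purely arithmetic relation between $n_1$, $N$, and $g$; solving it for $g$ shows that $n_1 \ge 3N$ is compatible with $n \ge \frac{2N}{N-1}g$ only for small $g$, and one must check that range of $g$ directly, or observe that it is excluded by $g \ge 2$.

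The main obstacle I anticipate is the bookkeeping at the boundary: when $n_1 \ge 3N$, the quantity $\frac{n_1}{n_1-1}$ is only slightly larger than $1$, so the two sides of the inequality are genuinely close, and a crude bound will not separate them — one must exploit $k_2 + k_3 \le n_1 + 1$ together with the precise value $n_1 \le \frac{n_1}{n_1-1}(2g+k_2+k_3-2)/\,(\cdots)$ to pin down exactly which residual small cases of $g$ survive, and then dispatch those by hand (presumably they violate $g \ge 2$ or force $n_1 = n_2 = n_3$, contradicting the $\lcm$ condition). I expect the clean way to phrase the final step is: from $n = \frac{n_1}{n_1-1}(2g + k_2+k_3-2) \le \frac{2N}{N-1}g$ and $k_2+k_3 \le n_1+1$ one gets $2g(n_1 - 1)(N-1) \le \text{(something bounded in terms of } n_1, N)$, and since the left side grows with $g$ while the right does not, the bound $n_1 \le 3N-1$ drops out once $g$ exceeds a small explicit constant — which, for the application in Theorem~\ref{thm:n1-order-2}, is amply guaranteed by $g > (N-1)N(N+1)/2$.
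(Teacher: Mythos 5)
There is a genuine gap, and it sits exactly where you place your main step. After substituting Lemma~\ref{lem:k2k3}(i) and (v) into the hypothesis $n \geq \frac{2N}{N-1}g$, what you obtain is
$$
\frac{2n_1}{n_1-1}g + n_1 \;\geq\; \frac{2N}{N-1}g,
\qquad\text{i.e.}\qquad
g \;\leq\; \frac{(N-1)\,n_1(n_1-1)}{2(n_1-N)},
$$
which is precisely the inequality (\ref{eqn:g}) that the paper derives \emph{after} this Lemma. This does not bound $n_1$ and does not confine $g$ to a small range: the right-hand side grows roughly like $\frac{(N-1)n_1}{2}$ as $n_1\to\infty$, so for every $g$ there are values $n_1\geq 3N$ consistent with it (e.g.\ $N=2$, $n_1=100$ allows $g$ up to about $50$). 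Your claim that the surviving cases are ``excluded by $g\geq 2$'' is therefore false, and the argument does not close. The underlying reason is that your ``correct approach'' is not actually different from the naive one you set aside in your first paragraph: Theorem~\ref{thm:n1-order-1}'s upper bound is itself proved from $n=\frac{n_1}{n_1-1}(2g+k_2+k_3-2)$ and $k_2+k_3\leq n_1+1$, so you have gone in a circle and inherited the same need for a largeness assumption on $g$ — which the Lemma does not have. Indeed, the whole point of this Lemma in the paper is to supply the a priori bound $n_1\leq 3N-1$ that makes (\ref{eqn:g}) usable; it cannot be deduced from (\ref{eqn:g}).

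The missing idea is to use the ordering $n_1\leq n_2\leq n_3$ in the \emph{opposite} direction. From (\ref{eqn:R-H}), $\frac{2g-2}{n}=1-\bigl(\frac{1}{n_1}+\frac{1}{n_2}+\frac{1}{n_3}\bigr)\geq 1-\frac{3}{n_1}$, while the hypothesis gives $\frac{2g-2}{n}\leq\frac{(2g-2)(N-1)}{2Ng}<\frac{N-1}{N}=1-\frac{1}{N}$. Hence $1-\frac{3}{n_1}<1-\frac{1}{N}$, i.e.\ $n_1<3N$, so $n_1\leq 3N-1$; the strictness comes from the $-2$ in $2g-2$. This is the paper's proof, and it needs neither Lemma~\ref{lem:k2k3} nor any hypothesis on $g$. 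Your proposal never exploits the bound $\frac{1}{n_2}+\frac{1}{n_3}\leq\frac{2}{n_1}$, which is the one piece of information that actually forces $n_1$ to be small.
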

\begin{proof}
By the equation (\ref{eqn:R-H})
$$
\frac{2g-2}{n} = 1 - \left( \frac{1}{n_1} + \frac{1}{n_2} + \frac{1}{n_3} \right).  
$$
By the assumption that $n_1 \leq n_2 \leq n_3$ and $n \geq \frac{2N}{N-1}g$, 
we see 
$$
\frac{(2g-2)(N-1)}{2Ng} \geq 1 - \frac{3}{n_1}. 
$$
By this inequality, we get an evaluation 
$n_1 \leq \frac{3g}{g+N-1} N < 3N$. 
Therefore, we conclude $n_1 \leq 3N-1$. 
\end{proof}
Here we assume that $n_1 \geq N + 1$. 
By Theorem \ref{thm:n1-order-1} and the assumption $n \geq \frac{2N}{N-1}g$, 
we see 
$$
\frac{2N}{N-1}g \leq \frac{2n_1}{n_1-1} g + n_1.
$$
By the above inequality and 
the inequality $\frac{2N}{N-1} > \frac{2n_1}{n_1-1}$ 
obtained from the assumption $n_1 \geq N + 1$, 
we see 
\begin{equation} \label{eqn:g}
g \leq \frac{(N-1)n_1(n_1-1)}{2(n_1-N)} . 
\end{equation}
By the above Lemma, $n_1 \leq 3N -1$. 
When $N \geq 3$, by the inequality $3N-1 \leq N^2$ and 
the assumption $n_1 \geq N+1$, we see $N+1 \leq n_1 \leq N^2$. 
From this inequality, $(n_1 - (N+1))(n_1 - N^2) \leq 0 $, 
that is, $n_1^2 - (N^2 + N +1)n_1 + N^2(N+1) \leq 0$, 
hence $n_1^2 - n_1 \leq N(N+1)(n_1 - N)$. 
By dividing the last inequality by $n_1 - N > 0$, we obtain 
$$
\frac{n_1(n_1-1)}{n_1-N} \leq N(N+1). 
$$
By this inequality and the inequality (\ref{eqn:g}), we see 
$$
g \leq \frac{(N-1)N(N+1)}{2}, 
$$
which contradicts the assumption. 
Therefore $n_1 \leq N$. 

We get a conclusion $n_1 = N$. 
\end{proof}

\section{The uniqueness by the order}

\begin{maintheorem}
Let $g > 30$ and $n > 8g/3$. 
If there is a periodic map whose order is $n$, then 
this periodic map is unique up to conjugacy and power. 
\end{maintheorem}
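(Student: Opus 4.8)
The plan is to feed the results of \S3 into Nielsen's classification (Theorem~\ref{thm:Nielsen}). Let $f$ be a periodic map of order $n$ on $\Sigma_g$ with $g>30$ and $n>8g/3$. By \cite{Kasahara} such an $f$ is irreducible, so its orbit space is $\mathbb{S}^2(n_1,n_2,n_3)$ with $n_1\le n_2\le n_3$, which puts us in the setting of \S3. First I would pin down $n_1$. Applying the auxiliary lemma inside the proof of Theorem~\ref{thm:n1-order-2} with $N=4$ (legitimate since $n>\tfrac{2\cdot4}{3}g$) gives $n_1\le 11$; and for each $n_1\in\{5,\dots,11\}$ the upper bound $n\le\tfrac{2n_1}{n_1-1}g+n_1$ of Theorem~\ref{thm:n1-order-1} forces $n\le 8g/3$ once $g\ge 30$ (the value $n_1=5$ being the tightest), contradicting the hypothesis. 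Hence $n_1\le 4$; and by Theorem~\ref{thm:n1-order-2} with $N=4$ (legitimate since $g>30=(4-1)4(4+1)/2$), the remark following it, and the $n_1=2$ case of Theorem~\ref{thm:n1-order-1}, the integer $n_1\in\{2,3,4\}$ is determined by which of the pairwise disjoint ranges $[8g/3,3g)$, $[3g,4g)$, $[4g,4g+2]$ contains $n$.

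Next I would show that the whole branching data is determined by $n$ and $g$. Put $k_2=n/n_2$ and $k_3=n/n_3$. By Lemma~\ref{lem:k2k3} we have $k_2\ge k_3$, these are coprime divisors of $n_1$, and $k_2+k_3=\tfrac{(n_1-1)n}{n_1}-2g+2$ is a function of $n,g,n_1$. Since $n_1\le 4$, the only admissible pairs are $(k_2,k_3)\in\{(1,1),(2,1),(3,1),(4,1)\}$, whose sums $2,3,4,5$ are distinct; hence $(k_2,k_3)$, and so $(n_1,n_2,n_3)$, is determined. In particular $k_3=1$ always, i.e.\ $n_3=n$. Consequently all periodic maps of order $n$ on $\Sigma_g$ have three branch points with the same branching indices, and by Theorem~\ref{thm:Nielsen} it remains to prove that the total valency of such a map is unique up to the actions of taking a power and of renumbering the branch points.

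For this last step I would argue case by case on $(n_1,k_2,k_3)$. Since $n>8g/3$ rules out the pair $(4,1,1)$ (which forces $n=8g/3$), only the six cases $(n_1,k_2,k_3)\in\{(2,1,1),(2,2,1),(3,1,1),(3,3,1),(4,2,1),(4,4,1)\}$ occur. In each of them $n_3=n$, so condition (3) of Proposition~\ref{prop:cond-data} holds automatically and condition (1) determines the valency numerator at $Q_3$ from those at $Q_1,Q_2$; write the valency at $Q_i$ as a reduced fraction $m_i/n_i$. Because $n_1\mid n$, some power sends $m_1$ to $1$; the residual powers are then those with $k^*\equiv 1\pmod{n_1}$. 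In the five cases other than $(3,1,1)$, a short computation with the Chinese Remainder Theorem — using the exact power of each prime dividing $n$, as dictated by Harvey's condition and the genus formula — shows that the residual powers (or the full power action, when $m_1=1$ is already automatic) act transitively on the admissible values of $m_2$, after which $m_3$ is forced. In the case $(3,1,1)$, where $n_2=n_3=n$, the residual powers form an index-two subgroup of $(\mathbb{Z}/n)^\times$, and one completes the argument using the transposition of $Q_2$ and $Q_3$ together with a short computation modulo $3$ (splitting on whether $9\mid n$) to see that the admissible valencies still form a single orbit. With this, Theorem~\ref{thm:Nielsen} yields the uniqueness.

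The step I expect to be the main obstacle is this last one: verifying that, for each admissible $(n_1,n_2,n_3)$, the admissible total valencies really form a single orbit under powers and renumbering. The delicacy is that after normalizing $m_1=1$ the residual symmetry need not act transitively on $m_2$ by multiplication alone, so one must track the prime factorization of $n$ relative to $n_1$ and, when two branch points share the index $n$, use their transposition. This is precisely the borderline that breaks at $n=8g/3$: there $(n_1,n_2,n_3)=(4,n,n)$, the residual symmetry has index two, the transposition fixes $m_2$ modulo $4$ when $8$ divides $n$ but $16$ does not, and two distinct classes survive — which is the source of the non-conjugate examples in the second half of Theorem~\ref{thm:main}.
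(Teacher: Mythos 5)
Your proposal follows the same skeleton as the paper's proof --- irreducibility via \cite{Kasahara}, reduction of $n_1$ using the results of \S 3, enumeration of $(k_2,k_3)$ via Lemma~\ref{lem:k2k3}, and a case-by-case determination of the total valency feeding into Theorem~\ref{thm:Nielsen} --- but it diverges in two substantive ways. First, the paper does not re-treat the range $n\ge 3g$: it quotes \cite{Kulkarni} and \cite{Hirose} there, assumes $8g/3<n<3g$, and gets $n_1=4$ exactly from Theorem~\ref{thm:n1-order-2}, leaving only the two cases $(k_2,k_3)=(4,1),(2,1)$ plus the excluded $(1,1)$. You instead handle all of $n>8g/3$ uniformly; your derivation of $n_1\le 4$ (the inner lemma with $N=4$ gives $n_1\le 11$, then Theorem~\ref{thm:n1-order-1} eliminates $5\le n_1\le 11$ for $g>30$, with $n_1=5$ the tight case) is correct, and the payoff is a self-contained argument --- but the price is that you must reprove the four cases with $n_1\in\{2,3\}$, which are precisely the content of the two cited papers. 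Second, your uniqueness step is organized differently: you normalize $m_1=1$ and then must show the residual powers $k^*\equiv 1 \pmod{n_1}$ (together with transpositions of branch points of equal index) act transitively on the admissible $m_2$. The paper instead normalizes the valency at the branch point of index $n$ (the image of a fixed point) to $1/n$ by a power; the residual power group is then trivial, and condition (1) of Proposition~\ref{prop:cond-data} together with irreducibility of $a/4$ and of $b/n_2$ admits exactly \emph{one} pair $(a,b)$ in each of the two $n_1=4$ cases, by a two-line parity argument --- no orbit or CRT computation is needed for the cases that are actually new here.

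The one shortfall is the step you yourself flag: the transitivity claims are asserted (``a short computation with the Chinese Remainder Theorem shows\dots'') rather than carried out, and they are the mathematical heart of the statement. They are in fact all true: for $(4,4,1)$, $(4,2,1)$, $(3,3,1)$, $(2,2,1)$ the reduction of the residual power group onto $(\mathbb{Z}/n_2)^\times$ is surjective because $\gcd(n_1,n_2/k_2\cdot k_2)$-type obstructions vanish by Harvey's condition, for $(2,1,1)$ the full power group already acts transitively with no transposition needed, and $(3,1,1)$ genuinely requires the transposition-plus-mod-$9$ analysis you describe, which is the main labor of \cite{Hirose}. To make your argument complete you must either write these six verifications out or, as the paper does, cite \cite{Kulkarni} and \cite{Hirose} for $n\ge 3g$ and execute only the two $n_1=4$ cases --- for which the fixed-point normalization above is the cleaner route. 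Your closing observation about the boundary $n=8g/3$, where $(k_2,k_3)=(1,1)$ and two power-orbits survive, correctly identifies the mechanism behind the examples in the second half of Theorem~\ref{thm:main}.
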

\begin{remark}
In the sentence, 
{\em if the genus $g$ is sufficiently large and $n > Mg$ (or $n \geq Mg$) 
then the order determines the conjugacy class of the periodic map 
up to power,\/} 
the condition of the order $n > 8g/3$ is best possible. 
In fact, when the genus $g = 3(2k+1)$, 
there are two periodic map of order $n = 8g/3$ whose total valencies are 
$$
\frac{1}{4} + \frac{1}{16k+8} + \frac{12k+5}{16k+8}, \qquad
\frac{3}{4} + \frac{1}{16k+8} + \frac{4k+1}{16k+8} 
$$
and any power of the first one is not conjugate to the second one. 
\end{remark}

\begin{proof}
By \cite{Kasahara}, the periodic map $f$ satisfying the condition 
in this Theorem is irreducible, that is 
$\Sigma_g / f = \mathbb{S}^2(n_1, n_2, n_3)$. 
As in \S3, we assume that $n_1 \leq n_2 \leq n_3$. 
Since we already discussed the case where $n \geq 3g$ in 
\cite{Kulkarni} and \cite{Hirose}, we assume that $n < 3g$. 
Since $g > 30 = \frac{4 \cdot (4^2-1)}{2}$, 
$\frac{2 \cdot (4-1)}{4-2} g = 3g > n 
> \frac{8}{3} g = \frac{2 \cdot 4}{4-1} g$, 
$n_1=4$ by Theorem \ref{thm:n1-order-2}. 
Let $k_2 = n/n_2$ and $k_3=n/n_3$. 
By Lemma \ref{lem:k2k3}, there are $3$ cases 
$(k_2, k_3)$ $=$ $(4, 1)$, $(2, 1)$, $(1, 1)$. 

(1) $(k_2, k_3) = (4, 1)$\ :\ 
By (i) of Lemma \ref{lem:k2k3}, 
the order $n = \frac{8}{3}g + 4 > \frac{8}{3} g$. 
Since $n$ is an integer, $g$ should be a multiple of $3$. 
Let $g = 3l$, then $n=8l+4$, 
$n_2 = n/k_2 = 2l+1$, and $n_3 = n/k_3 = 8l+4$. 
We determine the numerators $a, b, c$ of the valency data 
$$
\frac{a}{4} + \frac{b}{2l+1} + \frac{c}{8l+4}. 
$$
Since the branch point corresponding to $c/8l+4$ is 
the image of a fixed point of $f$ by $\pi_f$, 
we fix $c=1$ by taking a proper power of the periodic map $f$. 
Since $a/4$ is an irreducible fraction, $a = 1$ or $3$. 
If $a=3$, then $2b=l$. If $a=1$, then $2b = 3l+1$. 

When $l$ is even, we put $l=2m$. 
If $a=3$, then $b=m$. If $a=1$, then $2b=6m+1$. 
Therefore, $a=3$ and the total valency should be 
$$
\frac{3}{4} + \frac{m}{4m+1} + \frac{1}{16m+4}. 
$$

When $l$ is odd, we put $l=2m+1$. 
If $a=3$, then $2b=2m+1$. If $a=1$, then $b=3m+2$. 
Therefore, $a=1$ and the total valency should be 
$$
\frac{1}{4} + \frac{3m+2}{4m+3} + \frac{1}{16m+12}. 
$$

(2) $(k_2, k_3) = (2, 1)$ \ :\ 
By (i) of Lemma \ref{lem:k2k3}, 
the order $n = \frac{4(2g+1)}{3} > \frac{8}{3}g$. 
Since $n$ is an integer, $2g+1$ should be a multiple of $3$. 
Therefore $g \equiv 1 \mod 3$. 
Let $g = 3l+1$, then 
$n = 8l+4$, $n_2 = 4l + 2$, and $n_3 = 8l+4$. 
We determine the numerators $a, b, c$ of the valency data 
$$
\frac{a}{4} + \frac{b}{4l+2} + \frac{c}{8l+4}. 
$$
Since the branch point corresponding to $c/8l+4$ is 
the image of a fixed point of $f$ by $\pi_f$, 
we fix $c=1$ by taking a proper power of the periodic map $f$. 
Since $a/4$ is an irreducible fraction, $a = 1$ or $3$. 
If $a=1$, then $b=3l+1$. If $a=3$, then $b=l$. 
Since $\frac{b}{4l+2}$ is also an irreducible fraction, 
$b$ should be an odd integer. 
If $l$ is an even integer, $b=3l+1$ and $a=1$. 
Hence, the total valency should be 
$$
\frac{1}{4} + \frac{3l+1}{4l+2} + \frac{1}{8l+4}. 
$$
If $l$ is an odd integer, $b=l$ and $a=3$. 
Hence, the total valency should be 
$$
\frac{3}{4} + \frac{l}{4l+2} + \frac{1}{8l+4}. 
$$

(3) $(k_2, k_3) = (1, 1)$ \ :\ 
By (i) of Lemma \ref{lem:k2k3}, 
the order $n = \frac{8}{3} g$, which contradicts the condition 
$n > \frac{8}{3}g$. 
\end{proof}

By the proof of the above Theorem, \cite{Kulkarni} and \cite{Hirose} 
we see: 
\begin{cor}
Let $g > 30$ and $n > 8g/3$. 
If there is a periodic map $f$ of $\Sigma_g$ whose order is $n$, then 
$f$ is conjugate to a power of one of periodic maps listed on 
Table {\rm \ref{tab:big}}. 
\end{cor}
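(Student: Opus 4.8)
The plan is to derive the corollary by stitching together the classification obtained in the proof of Theorem 1.1 (valid in the range $8g/3 < n < 3g$) with the earlier classifications of \cite{Kulkarni} (the range $n \ge 4g$) and \cite{Hirose} (the range $3g \le n < 4g$), and to record the total valencies that occur as the rows of Table \ref{tab:big}. As in the proof of Theorem 1.1, the first step is to note that \cite{Kasahara} forces any periodic map $f$ of order $n > 8g/3$ to be irreducible, so that $\Sigma_g/f = \mathbb{S}^2(n_1,n_2,n_3)$ with $n_1 \le n_2 \le n_3$; one then stratifies the argument by the size of $n$.

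For $n \ge 3g$ I would simply invoke the explicit lists of \cite{Kulkarni} and \cite{Hirose}: each of those papers shows that, for $g$ in the relevant range, every periodic map of the given order is conjugate up to power to one drawn from a short explicit list of total valencies. Under the standing hypothesis $g > 30$ the genus is large enough for both results to apply; moreover Theorem \ref{thm:n1-order-2} already pins down $n_1 = 3$ when $3g \le n < 4g$ and $n_1 = 2$ when $n \ge 4g$ for such $g$, which is precisely why these lists are short. I would transcribe their data, rewritten in the notation $(n, \theta_1/n + \cdots + \theta_b/n)$ used here, into the corresponding blocks of Table \ref{tab:big}.

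For $8g/3 < n < 3g$ I would quote the computation carried out inside the proof of Theorem 1.1: since $g > 30 = \frac{4 \cdot (4^2-1)}{2}$, Theorem \ref{thm:n1-order-2} gives $n_1 = 4$, and the analysis of the three cases $(k_2,k_3) \in \{(4,1),(2,1),(1,1)\}$ supplied by Lemma \ref{lem:k2k3} --- after normalizing the valency at the image of a fixed point of $f$ to $1/n_3$ by passing to a suitable power of $f$, and using that $\theta_1/4$ and $\theta_2/n_2$ must be reduced fractions --- leaves exactly the four families
$$
\frac{3}{4} + \frac{m}{4m+1} + \frac{1}{16m+4},\qquad
\frac{1}{4} + \frac{3m+2}{4m+3} + \frac{1}{16m+12},
$$
$$
\frac{1}{4} + \frac{3l+1}{4l+2} + \frac{1}{8l+4},\qquad
\frac{3}{4} + \frac{l}{4l+2} + \frac{1}{8l+4}
$$
(the case $(k_2,k_3)=(1,1)$ being excluded by the strict inequality $n > 8g/3$). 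These four families form the remaining block of Table \ref{tab:big}. Combining the blocks, every periodic map of order $n > 8g/3$ on $\Sigma_g$ with $g > 30$ has total valency listed in Table \ref{tab:big}, and hence, by Theorem \ref{thm:Nielsen}, is conjugate to a power of the corresponding map.

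I expect the only real difficulty here to be bookkeeping rather than mathematics: one has to confirm that the bound $g > 30$ is compatible with the ``sufficiently large genus'' hypotheses of \cite{Kulkarni} and \cite{Hirose}, translate their data faithfully into the total-valency language used in this paper, and check that the boundary orders $n = 3g$ and $n = 4g$ are accounted for exactly once, so that the blocks of Table \ref{tab:big} are jointly exhaustive and pairwise non-redundant. Once that verification is in place, the corollary follows immediately.
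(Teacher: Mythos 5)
Your proposal matches the paper's own (one-line) justification exactly: the corollary is obtained by combining the four total valencies derived in the proof of Theorem 1.1 for the range $8g/3 < n < 3g$ with the explicit lists from \cite{Kulkarni} and \cite{Hirose} covering $n \geq 3g$, and transcribing everything into Table \ref{tab:big}. The extra bookkeeping you flag (consistency of the genus bounds and non-overlap of the ranges) is exactly the implicit content of the paper's citation, so your argument is correct and essentially identical in approach.
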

\begin{table}[ht]
\caption{}
\label{tab:big}
\begin{center}
\begin{tabular}{c|c}
\noalign{\hrule height0.8pt}
genus $g$ & total valency \\
\hline
{\rm arbitrary} & 
$\left(4g+2, \; \displaystyle\frac{1}{2}+\frac{g}{2g+1}+\frac{1}{4g+2} \right)$ \\
\hline
{\rm arbitrary} & 
$\left(4g, \; \displaystyle\frac{1}{2}+\frac{2g-1}{4g}+\frac{1}{4g} \right)$ \\
\hline
$3k$ & 
$\left(3g+3, \; \displaystyle\frac{2}{3}+\frac{k}{g+1}+\frac{1}{3g+3} \right)$ \\
\hline
$3k+1$ & 
$\left(3g+3, \; 
\displaystyle\frac{1}{3}+\frac{2k+1}{g+1}+\frac{1}{3g+3} \right)$ \\
\hline
$3k$ {\rm or} $3k+1$ & 
$\left(3g, \; \displaystyle\frac{1}{3}+\frac{2g-1}{3g}+\frac{1}{3g} \right)$ \\
\hline
$3k+2$ &  
$\left(3g, \; \displaystyle\frac{2}{3}+\frac{g-1}{3g}+\frac{1}{3g} \right)$ \\
\hline
$6m$ &
$\left(\displaystyle\frac{8}{3}g+4, \; \displaystyle\frac{3}{4}+
\frac{m}{4m+1}+\frac{1}{16m+4} \right)$ \\
\hline
$6m+3$ & 
$\left(\displaystyle\frac{8}{3}g+4, \; \displaystyle\frac{1}{4}+
\frac{3m+2}{4m+3}+\frac{1}{16m+12} \right)$ \\
\hline
$6m+1$ &
$\left(\displaystyle\frac{4(2g+1)}{3}, \; \displaystyle\frac{1}{4}+
\frac{6m+1}{8m+2}+\frac{1}{16m+4} \right)$ \\
\hline
$6m+4$ &
$\left(\displaystyle\frac{4(2g+1)}{3}, \; \displaystyle\frac{3}{4}+
\frac{2m+1}{8m+6}+\frac{1}{16m+12} \right)$ \\
\noalign{\hrule height0.8pt}
\end{tabular}
\end{center}
\end{table}
\section{A proof of Wiman's Theorem \cite{Wiman}}
We provide a short proof of Wiman's Theorem \cite{Wiman} using the argument in \S 3.

\begin{theorem}
When $g \geq 2$, the order of any periodic map of $\Sigma_g$ is at most $4g+2$.
\end{theorem}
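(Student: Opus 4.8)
The plan is to reduce Wiman's bound to a short case analysis using Theorem \ref{thm:n1-order-1}. First I would observe that if a periodic map $f$ of $\Sigma_g$ has order $n$ larger than $4g$, then by \cite{Kasahara} (or by an elementary Riemann--Hurwitz argument) its orbit space is $\mathbb{S}^2(n_1,n_2,n_3)$, so the analysis of \S3 applies with $n_1\le n_2\le n_3$. Since $n>4g=\frac{2\cdot 2}{2-1}g$, and since $\frac{2n_1}{n_1-1}$ is strictly decreasing in $n_1$ with value $4$ at $n_1=2$, Theorem \ref{thm:n1-order-1} forces $n_1=2$: indeed if $n_1\ge 3$ then $n\le \frac{2n_1}{n_1-1}g+n_1\le 3g+n_1<4g$ once $g$ is not too small, and the few remaining small-genus cases can be checked directly. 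So we may assume $n_1=2$.

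With $n_1=2$, Lemma \ref{lem:k2k3} severely restricts the situation: $k_2$ and $k_3$ are divisors of $n_1=2$, coprime to each other, with $k_2\ge k_3$, so $(k_2,k_3)=(2,1)$ or $(1,1)$. In the case $(k_2,k_3)=(1,1)$ part (i) of Lemma \ref{lem:k2k3} gives $n=\frac{2}{1}(2g+0)=4g$, contradicting $n>4g$. In the case $(k_2,k_3)=(2,1)$ we get $n=2(2g+1)=4g+2$. Hence whenever $n>4g$ we must have $n=4g+2$, which already proves the theorem for all orders exceeding $4g$. It remains only to confirm that $n=4g+2$ is actually attained and that no value strictly between $4g$ and $4g+2$ occurs; the latter is immediate since the only admissible $n$ in that range with $n_1=2$ is $4g+2$ itself, and for orders $n\le 4g$ there is nothing to prove.

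For the small-genus bookkeeping I would simply note that the inequality $\frac{2n_1}{n_1-1}g+n_1<4g$ needed to eliminate $n_1\ge 3$ rearranges to $g>\frac{n_1(n_1-1)}{2}$; combined with the Lemma above (which bounds $n_1\le 3N-1=5$ when $N=2$, so $n_1\le 5$), this covers all $g\ge 11$ outright, and for $2\le g\le 10$ one checks the finitely many admissible branching data $(n_1,n_2,n_3)$ with $n>4g$ by hand against the Riemann--Hurwitz equation (\ref{eqn:R-H}), in each case finding no solution with $n>4g+2$. The main obstacle, such as it is, is organizing this finite check cleanly and invoking the right form of the irreducibility input; everything else is a direct consequence of the structural results of \S3, which is precisely the point of presenting this as a byproduct of the paper's method.

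\begin{proof}
Suppose $f$ is a periodic map of $\Sigma_g$ of order $n>4g+2$; we derive a contradiction. First, $n>4g>2g+2$ forces, by \cite{Kasahara} together with (\ref{eqn:R-H}), that $\Sigma_g/f=\mathbb{S}^2(n_1,n_2,n_3)$ with $n_1\le n_2\le n_3$, so the results of \S3 apply. By Theorem \ref{thm:n1-order-1}, $n\le\frac{2n_1}{n_1-1}g+n_1$. If $n_1\ge 3$, then $\frac{2n_1}{n_1-1}\le 3$, and by the Lemma following Theorem \ref{thm:n1-order-2} (with $N=2$) we have $n_1\le 5$; hence $n\le 3g+5$, which is less than $4g+2$ as soon as $g\ge 4$, and a direct inspection of the finitely many admissible triples with $2\le g\le 3$ yields no order exceeding $4g+2$. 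Thus $n_1=2$. By Lemma \ref{lem:k2k3}(iii)(iv), $k_2,k_3\mid 2$ and $\gcd\{k_2,k_3\}=1$ with $k_2\ge k_3$, so $(k_2,k_3)\in\{(2,1),(1,1)\}$. By Lemma \ref{lem:k2k3}(i), $n=\frac{n_1}{n_1-1}\bigl(2g+(k_2+k_3-2)\bigr)=2\bigl(2g+(k_2+k_3-2)\bigr)$, giving $n=4g$ when $(k_2,k_3)=(1,1)$ and $n=4g+2$ when $(k_2,k_3)=(2,1)$. In either case $n\le 4g+2$, contradicting $n>4g+2$. Therefore every periodic map of $\Sigma_g$ has order at most $4g+2$.
\end{proof}
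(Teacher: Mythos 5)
Your proof is correct, but it takes a genuinely different route from the paper's. The paper's proof is self-contained: it disposes of the cases $g'\geq 1$ and $g'=0$ with $j\geq 4$ branch points by direct Riemann--Hurwitz estimates (the subcase $j=4$, $n_1=2$ in fact also needs Harvey's lcm condition, so your parenthetical suggestion that an ``elementary Riemann--Hurwitz argument'' alone reduces to three branch points undersells that step), and only then treats the triangle case. You instead outsource the entire reduction to the irreducibility theorem of \cite{Kasahara}, which is legitimate --- the paper itself invokes it in \S 4, and $n>4g+2$ comfortably exceeds its threshold --- but it makes the proof depend on a nontrivial external result exactly where the paper takes pains to be complete. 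In the triangle case your argument is arguably sharper: combining Lemma \ref{lem:k2k3} (i)--(iv) with $n_1=2$ pins the order down to $n\in\{4g,4g+2\}$, identifying the extremal actions, whereas the paper only needs the inequality. The trade-off is that your elimination of $n_1\geq 3$ (via the bound $n_1\leq 5$ and $n\leq 3g+5$) breaks down for $g\in\{2,3\}$ and you leave the finite check asserted rather than executed; it is genuinely trivial --- Theorem \ref{thm:n1-order-1} gives $n\leq \frac{2n_1}{n_1-1}g+n_1\leq 4g+2$ directly for $3\leq n_1\leq 5$ and $g=2,3$ --- but the paper avoids the case split altogether by proving $n_1\leq 2g+1$ (Lemma \ref{lem:n1-eval}), which makes the single identity $(4g+2)-n\geq (n_1-2)\left(\frac{2}{n_1-1}g-1\right)\geq 0$ work uniformly for all $n_1\geq 2$ and all $g\geq 2$.
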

In the following, it is the subcase III)-iii) which
is simplified by the argument mentioned above and 
seems to have been most involved.
While the treatment of the other subcases is standard, we include them for completeness.
\par
\begin{proof}
Let $n$ be the order of a periodic map $f$ of $\Sigma_g$, 
and $\Sigma_g / f = \Sigma_{g'}(n_1, \ldots, n_j)$, 
where $n_1 \leq n_2 \leq \cdots \leq n_j$. 
By the Riemann-Hurwitz formula, 
\begin{equation}\label{eq:RH}
\frac{2(g-1)}{n} 
= 2 (g' -1) + j - \left( \frac{1}{n_1}+\cdots+\frac{1}{n_j} \right). 
\end{equation}
I) When $g' \geq 2$, the RHS of (\ref{eq:RH}) $\geq 2 (g' - 1) \geq 2$.  
Therefore $2(g-1)/n \geq 2$, that is, $g-1 \geq n $, 
then we see $n \leq 4g+2$. 
\newline
II) We discuss the case where $g' = 1$. 
If $j=0$, then $g=1$, which contradicts the assumption 
$g \geq 2$. Hence, $j \geq 1$. 
Since each $n_i \geq 2$, we see $1/n_1 + \cdots + 1/n_j \leq j/2$.
Therefore, the RHS of (\ref{eq:RH}) $\geq j - j/2 = j/2$, 
hence $2(g-1)/n \geq j/2$, and we see $n \leq 4(g-1)/j$. 
This shows that $n \leq 4g+2$ in this case. 
\newline
III) We discuss the case where $g'=0$. 
We first note that Proposition \ref{prop:cond-data} implies 
$j \geq 3$. 
\newline
i) $j \geq 5$\ : in this case, 
the RHS of (\ref{eq:RH}) $\geq -2 + j/2 \geq 1/2$, 
hence  $n \leq 4(g-1)$. Therefore, $n \leq 4g+2$. \newline
ii) $j = 4$\ : 
We multiply $n$ on both sides of (\ref{eq:RH}) and change $n/n_i$ into $k_i$ 
for $i \not=1$, then we see, 
$$
\begin{aligned}
n &= \frac{n_1}{2n_1 - 1}(2g-2+k_2+k_3+k_4) \\
&= \frac{n_1}{2n_1 - 1}(2g-2) + \frac{1}{2n_1-1}(n_1 k_2+n_1 k_3 + n_1 k_4) \\
&\leq \frac{n_1}{2n_1 - 1}(2g-2) + \frac{3n}{2n_1-1}. 
\end{aligned}
$$
In the last inequality, we use $n_1 k_i \leq n_i k_i = n$. 
Since 
$$
n - \frac{3n}{2n_1 - 1} = n \left( 1 - \frac{3}{2n_1 - 1} \right) = n \frac{2(n_1-2)}{2n_1-1}, 
$$
in the case where $n_1 \geq 3$, we obtain 
$$
n \leq \frac{n_1}{n_1-2}(g-1). 
$$
Because $n_1/(n_1-2) \leq 3$, we have $n \leq 3(g-1) < 4g+2$. 
In the case where $n_1=2$, we assume that 
$n \geq 4g+2 \geq 4 \cdot 2 + 2 = 10$. 
By Harvey's lcm condition \cite{Harvey}, we remark 
\begin{equation}\label{lcm:Harvey}
n = \lcm \{n_1, n_2, n_3 \} 
= \lcm \{ n_1, n_2, n_4 \} 
= \lcm \{ n_1, n_3, n_4 \} 
= \lcm \{ n_2, n_3, n_4 \}. 
\end{equation}
If all $n_i$ are $2$ or $3$, then $n \leq 6$ by (\ref{lcm:Harvey}) 
which contradicts the assumption $n \geq 10$. 
Therefore, there are some $n_i$'s such that $n_i \geq 4$. 
By (\ref{lcm:Harvey}), there are at least $2$ $n_i$'s 
such that $n_i \geq 4$, hence $n_3, n_4 \geq 4$. 
We see 
$n_1 k_3 = \frac{n_1}{n_3} n_3 k_3 = \frac{n_1}{n_3} n \leq \frac{n}{2}$, 
and in the same way, we see $n_1 k_4 \leq \frac{n}{2}$. Therefore, 
$$
\begin{aligned}
n &= \frac{n_1}{2n_1 - 1}(2g-2) + \frac{1}{2n_1-1}(n_1 k_2+n_1 k_3 + n_1 k_4) \\
&\leq \frac{n_1}{2n_1 - 1}(2g-2) + \frac{1}{2n_1-1} (n + \frac{n}{2} + \frac{n}{2}) 
= \frac{4g-4}{3} + \frac{2}{3} n.  
\end{aligned}
$$
Hence we have $n \leq 4g-4$ contradicting the assumption $n \geq 4g+2$. 
We conclude $n < 4g+2$. 
\newline
iii) $j=3$\ : \ At first, we show the following Lemma: 
\begin{lemma} \label{lem:n1-eval}
If the orbit space of a periodic map $f$ of $\Sigma_g$ 
is a $2$-sphere with $3$ branch points, and let $n_1$ be 
the minimal branching index, then $n_1 \leq 2g+1$. 
\end{lemma}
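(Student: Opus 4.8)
The plan is to reduce the whole statement to the Riemann--Hurwitz identity together with two elementary observations: every branching index is at least $2$, and $n_1$ divides $n$ (so in particular $n_1 \le n$). No realizability input beyond what is already in \S3 is needed.

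First I would put the data in the normal form of \S3: order the branching indices as $n_1 \le n_2 \le n_3$ and set $k_i = n/n_i$, so that $k_1 \ge k_2 \ge k_3 \ge 1$ and $n = k_i n_i$ for each $i$. In this set-up equation~(\ref{eqn:R-H}) and Lemma~\ref{lem:k2k3} are available. I would then clear denominators in~(\ref{eqn:R-H}), using $n\bigl(\tfrac1{n_1}+\tfrac1{n_2}+\tfrac1{n_3}\bigr)=k_1+k_2+k_3$, to get the convenient form $2(g-1)=n-(k_1+k_2+k_3)$.

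Next I would split on the size of $n_1$. If $n_1 \le 3$, then since $g \ge 2$ we have $n_1 \le 3 \le 2g+1$ and there is nothing to prove. So assume $n_1 \ge 4$. Because $n_1$ is the smallest index, $\tfrac1{n_1}+\tfrac1{n_2}+\tfrac1{n_3}\le \tfrac3{n_1}<1$, equivalently $k_1+k_2+k_3 \le 3k_1$. Substituting into $2(g-1)=n-(k_1+k_2+k_3)$ gives $2g-2 \ge n-3k_1 = k_1 n_1 - 3k_1 = k_1(n_1-3)\ge n_1-3$, where the last step uses $k_1\ge 1$ and $n_1-3\ge 1$. Rearranging yields $n_1 \le 2g+1$, as desired. (An alternative route, combining $n \ge \frac{2n_1}{n_1-1}g$ from Theorem~\ref{thm:n1-order-1} with parts (i) and (v) of Lemma~\ref{lem:k2k3}, also works but is messier, so I would not take it.)

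I do not expect a serious obstacle in this lemma itself: the only point needing care is that the Riemann--Hurwitz coefficient $1-\tfrac3{n_1}$ is non-positive when $n_1\in\{2,3\}$, which is precisely why the case split is made, and those small cases are dispatched in one line. The genuinely delicate work of \S5 lies downstream, in \emph{using} this bound to finish subcase III)-iii): there one must feed $n_1 \le 2g+1$ into Harvey's lcm condition and carry out a careful bookkeeping over the admissible pairs $(k_2,k_3)$ to exclude every configuration with $n > 4g+2$.
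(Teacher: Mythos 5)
Your proof is correct and follows essentially the same route as the paper: both arguments rest on the Riemann--Hurwitz identity $2(g-1)=n-(k_1+k_2+k_3)$, the bound $k_1+k_2+k_3\le 3k_1$ coming from $n_1\le n_2\le n_3$, and a case split at $n_1\le 3$. The only difference is cosmetic: the paper first derives the intermediate lower bound $n\ge 2g+1$ and substitutes it into $2(g-1)\ge n\left(1-\frac{3}{n_1}\right)$, whereas you factor out $k_1$ directly to get $2g-2\ge k_1(n_1-3)\ge n_1-3$, which is a slight streamlining of the same computation.
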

\begin{proof}
By (\ref{eq:RH}), we see 
$$
2(g-1) = n \left( 1 - \left( \frac{1}{n_1}+\frac{1}{n_2}+\frac{1}{n_3} \right) \right) = 
n - (k_1 + k_2 + k_3). 
$$
From the above equation, we have 
\begin{equation}\label{eq:lower-bound}
n= 2g + (k_1+k_2+k_3) - 2 \geq 2g+1. 
\end{equation}
On the other hand, by the assumption $n_1 \leq n_2 \leq n_3$, 
we see $1/n_3 \leq 1/n_2 \leq 1/n_1$. 
It follows that $1-(1/n_1 + 1/n_2 + 1/n_3) \geq 1 - 3/n_1$, 
and by the equation (\ref{eq:RH}), 
$$
2(g-1) = n \left( 1- \left( \frac{1}{n_1}+\frac{1}{n_2}+\frac{1}{n_3} \right) 
\right) 
\geq n \left( 1-\frac{3}{n_1} \right). 
$$
If $\left( 1 - \dfrac{3}{n_1} \right) \leq 0$, then $n_1 \leq 3$. 
Since $g \geq 2$, we see $2g+1 \geq n_1$. 
If $\left( 1 - \dfrac{3}{n_1} \right) > 0$, 
by the equation (\ref{eq:lower-bound}), we have 
$$
2(g-1)\geq n \left( 1-\frac{3}{n_1} \right) \geq (2g+1) 
\left( 1-\frac{3}{n_1} \right).
$$
Therefore $2g+1 \geq n_1$. 
\end{proof}

By Theorem \ref{thm:n1-order-1}, we have 
$$
n \leq \frac{2 n_1}{n_1 - 1}g + n_1. 
$$
By this inequality, we see 
$$
\begin{aligned}
(4g+2)-n &\geq (4g+2)-\left( \frac{2n_1}{n_1 - 1}g+n_1 \right) = 
\left( 4 - \frac{2 n_1}{n_1-1} \right) g + (2-n_1) \\
& = \frac{2n_1-4}{n_1-1} g + (2-n_1) = (n_1 - 2) \left( \frac{2}{n_1-1}g -1 
\right), 
\end{aligned}
$$
Since the branching index is at least $2$, $n_1 - 2 \geq 0$. 
By Lemma \ref{lem:n1-eval} we have $n_1 \leq 2g+1$, 
that is, $\frac{2}{n_1-1}g-1 \geq 0$. 
By the above inequality, we conclude that $4g+2 \geq n$. 

\end{proof}
\subsection*{Acknowledgments}
The authors would like to express their gratitude to Professor Gromadzki
for informing about their preprint \cite{BCGH}.

\baselineskip=13pt
\noindent
{\large Susumu Hirose} \\
Department of Mathematics, \\ 
Faculty of Science and Technology, \\ 
Tokyo University of Science, \\
Noda, Chiba, 278-8510, Japan \\
E-mail: hirose\b{ }susumu@ma.noda.tus.ac.jp \\
\\
{\large Yasushi Kasahara} \\
Department of Mathematics, \\ 
Kochi University of Technology, \\
Tosayamada, Kami City, Kochi \\
782-8502 Japan \\
E-mail: kasahara.yasushi@kochi-tech.ac.jp
\end{document}